\newcommand{\nn}{\nonumber}
\def \[{\begin{equation}}
\def \]{\end{equation}}
\journalname{JOTA}
\begin{document}

\title{An over-relaxed ADMM for separable convex programming and its applications to statistical learning}

\author{Renyuan Ni}

%\authorrunning{Short form of author list} % if too long for running head

\institute{Renyuan Ni \at
              Department of Mathematics, Hong Kong Baptist University, Hong Kong, 999077, China \\
%              Tel.: +123-45-678910\\
%              Fax: +123-45-678910\\
              \email{19481535@life.hkbu.edu.hk}           %  \\
%             \emph{Present address:} of F. Author  %  if needed
%                \and
%            Suhong Jiang \at
%            Corresponding author, School of Accounting, Nanjing University of Finance and Economics, Nanjing, 210046, China \\
%               \email{jiang1985@163.com}           
%            \and
%            Jinquan Li \at
%             Co-Corresponding author, School of Management and Engineering, Nanjing University, Nanjing, 210093, China \\
%               \email{ljq@nju.edu.cn}
%%              second address
%            \and
%            Caihua Chen \at
%            School of Management and Engineering, Nanjing University, Nanjing, 210093, China \\
%               \email{chchen@nju.edu.cn}
%              second address
}

\date{Received: date / Accepted: date}
%The correct dates will be entered by the editor.

\maketitle

\begin{abstract}
The alternating direction method of multipliers (ADMM) has been applied successfully in a broad spectrum of areas. Moreover, it was shown in the literature that ADMM  is closely related to the Douglas-Rachford operator-splitting method, and can be viewed as a special case of the proximal point algorithm (PPA). As is well known, the relaxed version of PPA  not only  maintains the convergence properties of PPA in theory, but also numerically outperforms the original PPA. Therefore, it is interesting to ask whether ADMM can be accelerated by using the over-relaxation technique. In this paper, we answer this question affirmatively by developing an over-relaxed ADMM integrated with a criterion to
decide whether a relaxation step is implemented in the iteration.
%For solving this problem, the relaxation step is implemented for the variables of ADMM if a simple criterion is satisfied at current iteration. Else, the original ADMM is implemented. Thus, an over-relaxed ADMM is proposed. Furthermore,
The global convergence and $O(1/t)$ convergence rate of the over-relaxed ADMM are established. We also implement our proposed algorithm to solve Lasso and sparse inverse covariance selection problems, and compare its performance with the relaxed customized ADMM in \cite{CGHY} and the classical ADMM. 
 The results show that our algorithm substantially outperforms the other two methods.
\end{abstract}
\keywords{Convex separable programming  \and Alternating direction method of multipliers \and Over relaxation\and Convergence and convergence rate}
\subclass{90C25 \and  90C90 \and 65K05}

%All acknowledgements should be placed in the back of the paper after Conclusions..

\section{Introduction}
\label{intro}
This paper considers the following structured convex programming problem
\begin{equation}\label{eq:1}
  \min \{\theta_1(x) + \theta_2(y) \; | \;  Ax + By =b, \, x\in {\cal X}, y\in{\cal Y} \},
\end{equation}
where $\theta_1: \Re^{n_1} \to \Re$, $\theta_2:
\Re^{n_2} \to \Re$ are convex functions (but not necessarily smooth), $A\in \Re^{m\times n_1}$, $B\in \Re^{m\times n_2}$ and $b\in
\Re^m$, ${\cal X}\subset \Re^{n_1}$, ${\cal Y}\subset \Re^{n_2}$ are given closed convex sets. {Throughout this paper, the matrices $A$ and $B$ are assumed to have full column-rank.
Problem  \eqref{eq:1} has recently found many applications in a variety of domains such as image processing \cite{Tao2009Alternating,Ng2010Solving}, statistical learning \cite{boyd2011distributed} and communication networking \cite{Combettes2007A,Combettes2009Proximal}.}

The augmented Lagrangian function {of} Problem \eqref{eq:1} is
\begin{equation}\label{eq:2}
{\cal L}_\beta (x,y,\lambda ) = \theta _1(x) + \theta _2(y) - \lambda ^T(Ax + By - b) + \frac{\beta }{2}\left\| {Ax + By - b} \right\|^2,
\end{equation}
where $\lambda$ is the Lagrange multiplier and $\beta > 0$ is a penalty parameter. Notice that Problem \eqref{eq:1} has a separable structure and it is {favorable to take} this advantage in
the algorithmic design.  The alternating direction method of multipliers (ADMM), which can be seen as a splitting version of augmented Lagrangian method, originally  introduced in \cite{Gab76,glowinski1975approximation} is particularly suitable  for solving  \eqref{eq:1}. In fact, the ADMM decomposes the augmented Lagrangian separately with
respect to the variables $x$ and $y$, and then the resulted subproblems can be 
solved individually. At each iteration, the ADMM runs as follows:
\begin{subequations} \label{ADMM}
  \begin{numcases}{\hbox{\quad}}
  x^{k+1} = \arg\min \{{\cal L}_{\beta}(x,y^k,\lambda^k)\,|\, x\in {\cal X}\}, \label{eq:3a}\\[0.2cm]
  y^{k+1} = \arg\min \{{\cal L}_{\beta}(x^{k+1},y, \lambda^k) \,|\, y\in {\cal Y}\},\label{eq:3b}\\[0.2cm]
   \lambda^{k+1} =  \lambda^k- \beta (Ax^{k+1} + By^{k+1} -b).\label{eq:3c}
\end{numcases}
\end{subequations}
Since the ADMM enjoys wide applications, it has been studied extensively in the literature. For example, the ADMM is known to be convergent under mild conditions; see \cite{boyd2011distributed}. Under same conditions, it was shown the ADMM converges with an $O(1/n)$ rate  (where $n$ is the number of iterations)\cite{HeYuan,HeYuan-2}. Furthermore, with some additional assumptions, the works \cite{Boley,Han,Hong,Yang} establish local linear convergence rate results for the ADMM.

Another important issue for the ADMM is to design its accelerated version by slightly modifying the original ADMM with a simple relaxation scheme. Notice that the ADMM was shown to be closely related to the proximal point algorithms(PPA). In fact, ADMM is recovered when a special splitting form of PPA is applied to the dual problem of \eqref{eq:1}. For proximal type algorithms, theirs convergence rates can be improved when an additional over-relaxation step is incorporated on the essential variables; see

Tao proposed a relaxed variant of PPA and proved its linear convergence results\cite{tao2018optimal}. Gu and Yang further proved the optimal linear convergence rate of relaxed PPA under a regularity condition\cite{gu2019optimal}.

As been demonstrated  by Boyd et al. in \cite{boyd2011distributed},  the execution of ADMM is based on the input of $(y^k,\lambda^k)$,  and $x^k$ is not required at all. Thus $x$  plays the role of  intermediary variable  and $(y,\lambda)$ are essential variables in the scheme \eqref{ADMM}.  It is therefore natural to ask whether it is possible to obtain a faster ADMM type method by equipping the ADMM scheme \eqref{ADMM} with a relaxation step on the essential variable $(y^k,\lambda^k)$. This idea leads to the method
\begin{subequations}\label{o-ADMM}
\begin{numcases}{}
x^{k+1}={\rm arg}\!\min_{x\in \mathcal{X}}\{\theta_1(x)-x^TA^T\lambda^k+\frac{\beta}{2}\|Ax+By^k-b\|^2\}, \label{eq:5a}\\
\hat{y}^k={\rm arg}\!\min_{y\in \mathcal{Y}}\{\theta_2(y)-y^TB^T\lambda^{k}+\frac{\beta}{2}\|Ax^{k+1}+By-b\|^2\},\label{eq:5b} \\
\hat{\lambda}^k=\lambda^{k}-\beta(Ax^{k+1}+B\hat{y}^k-b).\label{eq:5c}
\end{numcases}
\end{subequations}
\begin{equation}\label{eq:6}
    \left(\begin{array}{c}
     y^{k+1}\\
    \lambda^{k+1}
   \end{array}
 \right)=
 \left(\begin{array}{c}
     y^{k}\\
    \lambda^{k}
   \end{array}
 \right)-\gamma\left(\begin{array}{c}
     y^{k}-\hat{y}^k\\
    \lambda^{k}-\hat{\lambda}^k
   \end{array}
\right),
\end{equation}
 %One may refer to  comprehensive reviews in \cite{boyd2011distributed,Eck12,Glow12}.
 %Recently, ADMM has attracted wide attention of many authors from different areas,see e.g %\cite{boyd2011distributed},\cite{hong2017linear},\cite{he2002new}. In \cite{fortin1982methodes},\cite{fortin2000augmented},\cite{glowinski1985numerical},
% Fortin and Glowinski suggested that the convergence can still be ensured if we attach a relaxation factor $s\in(0,\frac{{1 + \sqrt 5 }}{2})$, this results in the scheme
% \begin{subequations}
%\begin{numcases}{}
%{x^{k + 1}} = \arg \min \{ {{\cal L}_\beta }(x,{y^k},{\lambda ^k})|x \in {\cal X}\},\label{eq:4a}\\
%{y^{k + 1}} = \arg \min \{ {{\cal L}_\beta }(x,y,\lambda )|{\rm{y}} \in {\cal Y}\},\label{eq:4b}\\
%{\lambda ^{k + 1}} = {\lambda ^k} - s \beta (A{x^{k + 1}} + B{y^{k + 1}} - b).\label{eq:4c}
%\end{numcases}
%\end{subequations}
%Thus we can enlarge the step size for updating the Lagrange multiplier.\\
where  the relaxation factor $\gamma\in(0,2)$. Under certain
conditions, researchers have established some results. Some scholars only introduced the relaxation scheme on the essential variable $\lambda^k$. For example, Xu \cite{xu2007proximal} has proved that proximal ADMM can achieve the similar convergence result like the original ADMM when using the relaxation factor $\gamma\in(0,\frac{\sqrt{5}+1}{2})$. Moreover, Ma \cite{ma2020convergence} has derived the feasible region for the relaxation parameter $\gamma$ and proximal parameter and proved the global convergence rate of proximal ADMM. On the other hand, Ye \cite{Ye} proposed a method that modified by using a self-adaptive step size in \eqref{eq:6}, and the performance is observed to be  much improved. Cai \cite{CGHY} proved that if  the order of the $\hat{y}^k$ and $\hat{\lambda}^k$ update is swapped in \eqref{o-ADMM}, the algorithm is convergent. However, it is still not clear whether the method is convergent when we apply the relaxation scheme on the essential variable $(y^k,\lambda^k)$ and $\gamma$ is a constant in $(1,2)$. In this paper, we affirmatively address this question by introducing a simple criterion at each iteration. When the criterion is satisfied, we can over-relax the variables of ADMM with the constant stepsize $\gamma$.  Thus a new relaxed ADMM is proposed and will show that efficiency of the proposed algorithm on optimization problems from statistical learning.

The  paper is organized as follows.  In Section
2, we give a formal statement of the over-relaxed ADMM and provide some preliminaries. Convergence and convergence rate are proved in Section 3. In Section 4, we summarize the experimental results. Finally, some conclusions are made in
Section 5.
%Our contribution, in this paper, is to use an over relaxation method to solve problem \eqref{eq:1}. We present some familiar methods to the scheme \eqref{eq:4a},\eqref{eq:4b},\eqref{eq:4c}, but set the relaxation factor $s=2$ in order to improve its efficiency, we obtain the iterative scheme which can be explained as:
%
%where $0<r<2$. Although according to  \cite{fortin1982methodes},\cite{fortin2000augmented},\cite{glowinski1985numerical} again, our algorithm may not have global convergence , however, if we add a simple criterion, the algorithm converges when $0<r<2$ and its convergence rate can be greatly speed up.\\
%The rest of the paper is organized as follows. In Section 2, we provide some preliminary knowledge and propose the iterative scheme of our method. Some theoretical results about convergence analysis are proved in Section 3. We then apply our method to an application in scientific computation in Section 4; some numerical experiments are also reported in this section. Finally, some conclusions are drawn in Section 5.
\section{The Over-relaxed ADMM}
In this section, we describe our over-relaxation ADMM method for solving problem \eqref{eq:1}.
\subsection{Algorithm}
%Let $\tilde{\lambda}^k=\lambda^k-\beta(A\tilde{x}^k+By^k-b).$ We can separate the relaxed ADMM into two parts: \\
%The first step is:
\begin{center}\label{relax-ADMM}
\fbox{
\begin{minipage}[t]{0.9\linewidth}
\smallskip
\begin{subequations}
\textbf{Algorithm: } Over-relaxed ADMM method for solving \eqref{eq:1}.
\begin{numcases}{}
x^{k+1}={\rm arg}\!\min_{x\in \mathcal{X}}\{\theta_1(x)-x^TA^T\lambda^k+\frac{\beta}{2}\|Ax+By^k-b\|^2\},\mbox{ }\mbox{ }\mbox{ }\mbox{ }  \label{eq:8a}\\
\hat{y}^k={\rm arg}\!\min_{y\in \mathcal{Y}}\{\theta_2(y)-y^TB^T\lambda^{k}+\frac{\beta}{2}\|Ax^{k+1}+By-b\|^2\},\mbox{ }\mbox{ }\mbox{ }\mbox{ } \label{eq:8b} \\
\hat{\lambda}^k=\lambda^k-\beta(Ax^{k+1}+B\hat{y}^k-b).\label{eq:8c}
\end{numcases}
\end{subequations}

Criterion: if $(\lambda^{k}-\hat{\lambda}^k)^TB(y^k-\hat{y}^k)\geq 0$, choose $\gamma\in(1,2)$.
\begin{subequations}
\begin{numcases}{}
%\hat{\lambda}^k=\tilde{\lambda}^k+\beta B(y^k-\tilde{y}^k),\label{eq:9a}\\
y^{k+1}=y^k-\gamma(y^k-\hat{y}^k),\label{eq:9b}\\
\lambda^{k+1}=\lambda^k-\gamma(\lambda^k-\hat{\lambda}^k),\label{eq:9c}
\end{numcases}
else
\begin{equation}
y^{k+1}=\hat{y}^k,\quad  \lambda^{k+1}=\hat{\lambda}^k.
\end{equation}
\end{subequations}
\end{minipage}
}
\end{center}
%it can be simplified as follows:
%\begin{subequations}
%\begin{numcases}{}
%y^{k+1}=y^k-r(y^k-\tilde{y}^k),\label{eq:10a}\\
%\lambda^{k+1}=\lambda^k-r[\lambda^k-\tilde{\lambda}^k-\beta B(y^k-\tilde{y}^k)],\label{eq:10b}
%\end{numcases}
%\end{subequations}
\subsection{Variational characterization of \eqref{eq:1}}
In the following,  we characterize the optimality condition of the problem \eqref{eq:1} as a variational inequality (VI).  The VI reformulation plays a key role in our convergence analysis.

Let the Lagrangian function of \eqref{eq:1} be
\[  \label{Lagrange-2}
 L(x,y,\lambda) =\theta_1(x) + \theta_2(y) - \lambda^T(Ax+By-b).
   \]
Finding primal and dual optimal variables  for \eqref{eq:1} is equivalent to finding a saddle point for the
Lagrangian. Let $(x^*,y^*,\lambda^*)$ be a saddle point of \eqref{Lagrange-2}. Then we have
$$    L_{\lambda\in\Re^m}(x^*,y^*,\lambda) \le L(x^*,y^*,\lambda^*) \le L_{x\in {\cal X}, y\in {\cal Y}}(x,y,\lambda^*). $$
This saddle point problem can be combined into a system
\[  \label{VI-Chara}
    \left\{ \begin{array}{lrl}
     x^*\in {\cal X}, & \theta_1(x) - \theta_1(x^*) + (x-x^*)^T(- A^T\lambda^*) \ge 0,
      & \forall\, x\in {\cal X},
      \\
     y^*\in {\cal Y}, & \theta_2(y) - \theta_2(y^*) + (y-y^*)^T(- B^T\lambda^*) \ge 0,
      & \forall\, y\in {\cal Y},    \\
   \lambda^*\in \Re^m,  &(\lambda-\lambda^*)^T(Ax^*+By^*-b)\ge 0,  &  \forall \; \lambda\in \Re^m.
        \end{array} \right.
       \]
    Compactly, \eqref{VI-Chara} can be written as
       \begin{subequations}\label{VIP-G}
\[  \label{VI-P2-0}
   \hbox{VI}(\Omega,F,\theta)  \qquad      w^*\in \Omega, \quad \theta(u) -\theta(u^*) + (w-w^*)^T F(w^*) \ge 0, \quad \forall \,  w\in
      \Omega,
     \]
where
\[ \label{VI-P2-wF}
   \begin{array}{ccc}
      w = \left(\begin{array}{c}
                     x\\  y\\ \lambda \end{array} \right), \quad
                     u= \left(\begin{array}{c}
                     x\\  y  \end{array} \right),  &&  F(w) = \left(\begin{array}{c}
            - A^T\lambda \\
             -B^T\lambda \\
             Ax + By -b \end{array} \right),  \\[1cm]
     \theta(u)=\theta_1(x) +
                     \theta_2(y),   &\hbox{and}&
            \Omega={\cal X} \times {\cal Y}\times \Re^m.
            \end{array}
   \]
\end{subequations}
In this way,  problem \eqref{eq:1} is reformulated as a variational inequality \eqref{VIP-G}. We denote by $\Omega^*$ the solution set of $\hbox{VI}(\Omega,F,\theta)$. Note that $\Omega^*$ is nonempty under the nonempty assumption of the solution set of \eqref{eq:1}.

\subsection{Preliminaries}
Before proceeding with our analysis,  we introduce some notations which will be  frequently used.
First, for the iterate $(x^{k+1},y^{k+1},\lambda^{k+1})$ generated by the relaxed ADMM (\ref{relax-ADMM}), we define an auxiliary vector $\tilde{w}^k=(\tilde{x}^k,\tilde{y}^k,\tilde{\lambda}^k)$ as
\begin{subequations} \label{Tnotation}
\[  \tilde{x}^k=   x^{k+1}, \qquad   \tilde{y}^k =   \hat {y}^k,
                     \]
and
 \[  \label{TnotationL} \tilde{\lambda}^k =  \lambda^k-\beta(Ax^{k+1}+By^{k}-b).
 \]
 \end{subequations}
Note that $x^{k}$ is an intermediate variable
%Like the original ADMM scheme (\ref{ADMM}), in  the variable $x$ also plays an intermediate role in the sense that . Thus, as , we still call $x$ an {\it intermediate variable}
and $(y,\lambda)$  are essential variables in \eqref{relax-ADMM}, respectively.
Accordingly, for $w =(x,y,\lambda)$ and $w^k =(x^k,y^k,\lambda^k)$ generated by \eqref{relax-ADMM}, we use the notations
$$    v=   \left(\begin{array}{c}
                     y\\  \lambda  \end{array} \right),
                        \quad \hbox{and}   \quad
                     v^k=   \left(\begin{array}{c}
                     y^k\\  \lambda^k  \end{array} \right)
   $$
to denote the essential parts of  $w$ and $w^k$, respectively. We  denote the essential  part of $w^*$ in $\Omega^*$ by use $v^*=(y^*,\lambda^*) $  and let  ${\cal V}^*$ denote all the collection of  $v^*$.

Now we establish the relationship between the iterates $ v^k$ and $v^{k+1}$ generated by the relaxed ADMM \eqref{relax-ADMM}  and the auxiliary variable  defined  by \eqref{Tnotation}.

\begin{lemma} \label{LEM-Correction}For given $v^k=(y^k,\lambda^k)$, let $w^{k+1}$  be generated by the relaxed ADMM \eqref{relax-ADMM} and  ${\tilde w}^k$ be defined  by \eqref{Tnotation}.
Then, we have
\begin{subequations}\label{Correction}
\[ \label{Correction-V}
     v^{k+1} = v^k -  M (v^k-\tilde{v}^k),
      \]
where
\[\label{Matrix-M}
       M  =  \left(\begin{array}{cc}
          \gamma I    &    0 \\
        - \gamma\beta B   & \gamma I_m \end{array} \right).
     \]
\end{subequations}
\end{lemma}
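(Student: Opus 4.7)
The plan is to verify the claim by a short, direct algebraic calculation in two steps. The key observation is that $\tilde{\lambda}^k$ and $\hat{\lambda}^k$ are defined using \emph{different} $y$-inputs in the dual update: $\tilde{\lambda}^k$ in \eqref{TnotationL} uses $y^k$, whereas $\hat{\lambda}^k$ in \eqref{eq:8c} uses $\hat{y}^k$. All the other ingredients ($\lambda^k$, $Ax^{k+1}$, $b$, $\beta$) coincide. Consequently, subtracting the two defining formulas and using $\tilde{y}^k = \hat{y}^k$ gives the identity
\[
  \lambda^k - \hat{\lambda}^k \;=\; (\lambda^k - \tilde{\lambda}^k) \;-\; \beta B(y^k - \tilde{y}^k),
\]
which is the single nontrivial identity in the proof.

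With this identity in hand, I would substitute it into the relaxation step \eqref{eq:9c} to rewrite $\lambda^{k+1}$ as $\lambda^k$ minus a linear combination of $(y^k - \tilde{y}^k)$ and $(\lambda^k - \tilde{\lambda}^k)$. Combining this with \eqref{eq:9b} (which already has the desired form since $\tilde{y}^k = \hat{y}^k$), and reading the two scalar equations as one block-matrix equation on the essential variable $v = (y,\lambda)$, yields precisely \eqref{Correction-V} with $M$ as in \eqref{Matrix-M}. The diagonal blocks $\gamma I$ reflect the uniform relaxation factor applied to both essential variables, while the off-diagonal block $-\gamma\beta B$ is exactly the contribution of the $\hat{\lambda}^k$-vs-$\tilde{\lambda}^k$ mismatch scaled by $\gamma$.

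Finally, for the else branch of the algorithm, where $y^{k+1}=\hat{y}^k$ and $\lambda^{k+1}=\hat{\lambda}^k$, the same derivation applies with $\gamma = 1$, so the lemma holds uniformly across both branches. There is no real obstacle — the statement is a bookkeeping identity — and the only care needed is to keep track of sign conventions and the order of the primal/dual blocks in $v$ so that the resulting $M$ matches \eqref{Matrix-M} exactly.
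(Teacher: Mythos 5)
Your proposal is correct and follows essentially the same route as the paper: the paper likewise rewrites $\beta(Ax^{k+1}+B\hat y^k-b)$ as $\beta(A\tilde x^k+By^k-b)-\beta B(y^k-\hat y^k)$, which is exactly your identity $\lambda^k-\hat\lambda^k=(\lambda^k-\tilde\lambda^k)-\beta B(y^k-\tilde y^k)$, and then substitutes into \eqref{eq:9c} to read off the block matrix $M$. Your extra remark that the else branch is the $\gamma=1$ case is a harmless addition not present in the paper.
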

\noindent{Proof.} It follows from \eqref{eq:9c} and \eqref{Tnotation} that
\begin{eqnarray*} % \label{ADMM-SEl}
 \lambda^{k+1}
 & = & \lambda^{k} - \gamma\beta(Ax^{k+1}+B\hat{y}^k-b) \nn \\
  & = & \lambda^k  - \gamma\bigl[\beta(A\tilde{x}^k+By^k-b)-\beta B(y^k-\hat{y}^k)  \bigr] \nn \\
 & = & \lambda^k - \gamma(\lambda^k-\tilde{\lambda}^k) +\gamma\beta B(y^k-\tilde{y}^k).
\end{eqnarray*}
Together with \eqref{eq:9b}, we have the following
  relationship
$$
  \left(\begin{array}{c}
             y^{k+1} \\
              \lambda^{k+1} \end{array} \right)
             =
      \left(\begin{array}{c}
             y^{k} \\
              \lambda^{k} \end{array} \right)
                -    \left(\begin{array}{ccc}
          \gamma I    &    0 \\
              - \gamma\beta B   &\gamma I_m \end{array} \right) \left(\begin{array}{c}
             y^{k}-\tilde{y}^k \\
              \lambda^{k} - \tilde{\lambda}^k \end{array}
              \right).
             $$
The proof is complete. $\hfill$ $\Box$

\section{Convergence analysis}

The following lemma shows the discrepancy of the auxiliary vector $\tilde{w}^k$ from a solution point of VI$(\Omega, F, \theta)$.

\begin{lemma} \label{LEM-Prediction} For given $v^k=(y^k,\lambda^k)$, let $w^{k+1}$  be generated by the relaxed ADMM \eqref{relax-ADMM} and  ${\tilde w}^k$ be defined  by \eqref{Tnotation}.
Then, we have
\begin{subequations}\label{Prediction}
\[ \label{Prediction-W}
   \tilde{w}^k \in \Omega, \;\;
      \theta(u) -\theta(\tilde{u}^k)  + (w- \tilde{w}^k)^T  F(\tilde{w}^k)
          \ge  (v- \tilde{v}^k)^T Q(v^k-\tilde{v}^k), \;\; \forall w\in
          \Omega,
      \]
where
\[  \label{Matrix-Q}
      Q=  \left(\begin{array}{cc}
                \beta B^TB& 0\\
         -B  & \frac{1}{\beta} I_m \end{array} \right). \]
\end{subequations}
\end{lemma}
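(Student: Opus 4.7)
The plan is to derive three variational inequalities — one from each of the subproblems \eqref{eq:8a}, \eqref{eq:8b} and one from the definition \eqref{TnotationL} of $\tilde{\lambda}^k$ — and then to sum them into the single compact inequality stated in \eqref{Prediction-W}. The main trick is to rewrite every term using the auxiliary dual $\tilde{\lambda}^k = \lambda^k-\beta(Ax^{k+1}+By^k-b)$ (which is built from $y^k$) rather than $\hat{\lambda}^k$; the mismatch between $\tilde{\lambda}^k$ and the $y$-subproblem (which uses $\hat{y}^k=\tilde{y}^k$) is exactly what will produce the matrix $Q$ on the right-hand side.

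First I would write the optimality condition for \eqref{eq:8a}: since $\tilde{x}^k=x^{k+1}$ minimizes a convex function over $\mathcal{X}$,
\[
\theta_1(x)-\theta_1(\tilde{x}^k)+(x-\tilde{x}^k)^T\bigl[-A^T\lambda^k+\beta A^T(A\tilde{x}^k+By^k-b)\bigr]\ge 0,\;\;\forall x\in\mathcal{X}.
\]
The bracket is exactly $-A^T\tilde{\lambda}^k$ by \eqref{TnotationL}, so this line matches the first component of $F(\tilde{w}^k)$ and contributes zero to the right-hand side. For \eqref{eq:8b} at $\hat{y}^k=\tilde{y}^k$ the analogous bracket is $-B^T\lambda^k+\beta B^T(A\tilde{x}^k+B\tilde{y}^k-b)$. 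Writing $A\tilde{x}^k+B\tilde{y}^k-b = (A\tilde{x}^k+By^k-b) - B(y^k-\tilde{y}^k)$ and using \eqref{TnotationL} converts this bracket to $-B^T\tilde{\lambda}^k-\beta B^TB(y^k-\tilde{y}^k)$; moving the extra term to the right produces the top row of $Q(v^k-\tilde{v}^k)$.

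For the dual component I would use the same splitting to get the identity
\[
A\tilde{x}^k+B\tilde{y}^k-b \;=\; \tfrac{1}{\beta}(\lambda^k-\tilde{\lambda}^k)-B(y^k-\tilde{y}^k),
\]
and pair it with $(\lambda-\tilde{\lambda}^k)^T$; since $\lambda$ ranges over all of $\mathbb{R}^m$ this is an equality (and hence a VI) and its right-hand side is exactly the bottom row of $Q(v^k-\tilde{v}^k)$. Summing the three VIs and reading off coefficients yields the block matrix $Q$ in \eqref{Matrix-Q}, while the inclusion $\tilde{w}^k\in\Omega$ follows immediately from $x^{k+1}\in\mathcal{X}$, $\hat{y}^k\in\mathcal{Y}$ and $\tilde{\lambda}^k\in\mathbb{R}^m$.

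The main obstacle is the bookkeeping around the two distinct approximate duals $\hat{\lambda}^k$ (used by the algorithm) and $\tilde{\lambda}^k$ (used in the analysis). Using $\tilde{\lambda}^k$ makes the $x$-subproblem VI clean, but forces compensating correction terms $-\beta B^TB(y^k-\tilde{y}^k)$ in the $y$-row and $-B(y^k-\tilde{y}^k)$ in the $\lambda$-row. Tracking these corrections with the correct signs — and recognizing that they assemble into the asymmetric off-diagonal entry $-B$ of $Q$ — is the only delicate part of the computation.
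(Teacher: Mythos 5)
Your proposal is correct and follows essentially the same route as the paper: derive the three component variational inequalities from \eqref{eq:8a}, \eqref{eq:8b} and the definition \eqref{TnotationL}, rewrite each bracket in terms of $\tilde{\lambda}^k$ via the splitting $A\tilde{x}^k+B\tilde{y}^k-b=(A\tilde{x}^k+By^k-b)-B(y^k-\tilde{y}^k)$, and sum to read off $Q$. The correction terms you identify ($-\beta B^TB(y^k-\tilde{y}^k)$ in the $y$-row and $-B(y^k-\tilde{y}^k)+\tfrac{1}{\beta}(\lambda^k-\tilde{\lambda}^k)$ in the $\lambda$-row) are exactly those appearing in the paper's \eqref{ADMM-xlyl-y} and \eqref{ADMM-xlyl-l}.
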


\noindent{\bf Proof}. The optimality condition of the $x$-subproblem in \eqref {eq:8a} is
$$
  x^{k+1}\in {\cal X}, \;\; \theta_1(x) - \theta_1(x^{k+1})
       + (x-x^{k+1})^T\{-A^T\lambda^k + \beta A^T(Ax^{k+1} + By^k - b) \} \ge 0, \;\; \forall\;  x\in {\cal X}.
       $$
Using the auxiliary vector $\tilde{w}^k$ defined in \eqref{Tnotation}, it can be further written as
\begin{subequations}  \label{ADMM-xlyl}
\[  \label{ADMM-xlyl-x}
    \tilde{x}^{k}\in {\cal X}, \;\;    \theta_1(x) - \theta_1(\tilde{x}^k)
       + (x - \tilde{x}^k)^T(-A^T\tilde{\lambda}^k) \ge 0, \;\; \forall\;  x\in {\cal X}.  \]
Similarly,  the optimality condition of the $y$-subproblem can be written as
        \[
y^{k+1}\in {\cal Y}, \;\;  \theta_2(y)-\theta_2(y^{k+1})+(y-y^{k+1})^T\{-B^T\lambda^k+\beta B^T(Ax^{k+1}+By^{k+1}-b) \}\geq 0,
    \;\; \forall y\in {\cal Y}.
\]
Now, consider the $\{ \cdot\}$ term in the last inequality, we have
        \begin{eqnarray*}
   \lefteqn{y^{k+1}\in {\cal Y}, \;\;  \theta_2(y)-\theta_2(y^{k+1})+(y-y^{k+1})^T\left\{-B^T\big(\lambda^k- (Ax^{k+1} + B y^k-b)\big)\right. } \nn \\
        & &\qquad\qquad \qquad \qquad\qquad\qquad\qquad\qquad\qquad \left.    + \beta B^TB(y^{k+1}-y^k)\right\}\geq 0,
    \;\; \forall y\in {\cal Y}.
\end{eqnarray*}
Using the notations defined in \eqref{Tnotation},  we obtain
\[  \label{ADMM-xlyl-y}
    \tilde{y}^k\in {\cal Y},\;\;  \theta_2(y) - \theta_2(\tilde{y}^k)
      + (y - \tilde{y}^k)^T \bigl\{ -B^T{\tilde{\lambda}}^k + \beta B^TB(\tilde{y}^k - y^k) \bigr\}\ge 0, \;\; \forall y\in {\cal Y}.
             \]
For the definition of $\tilde{\lambda}^k$ given by \eqref{Tnotation},  we have
$$
    (A\tilde{x}^k + B\tilde{y}^k-b) -B(\tilde{y}^k-y^k) + (1/\beta) (\tilde{\lambda}^k-\lambda^k) = 0,
     $$
and it can be written as
\[  \label{ADMM-xlyl-l}
  \tilde{\lambda}^k\in \Re^m, \;\; (\lambda-\tilde{\lambda}^k)^T\bigl\{ (A\tilde{x}^k + B\tilde{y}^k-b) -B(\tilde{y}^k-y^k) + (1/\beta) (\tilde{\lambda}^k-\lambda^k)\bigr\} \ge 0, \;\; \forall\, \lambda\in \Re^m.
    \]
\end{subequations}
Combining \eqref{ADMM-xlyl-x}, \eqref{ADMM-xlyl-y} and \eqref{ADMM-xlyl-l}, and using the notations of \eqref{VIP-G},  the assertion of this lemma is proved. $\hfill$ {$\Box$}

Recall the matrix $M$  defined in (\ref{Matrix-M}) and the matrix $Q$ defined in \eqref{Matrix-Q}, let us define a new matrix $H$ as
\[   \label{H=QM-1}      H=QM^{-1}.\]
The following properties of $H$ will be useful in our analysis.

\begin{proposition} \label{Lemma-PH}
The matrix $H$ defined in (\ref{H=QM-1}) is symmetric and it can be written as
\[ \label{Matrix-H}
       H =  \frac{1}{\gamma}\left(\begin{array}{cc}
   \beta B^TB & 0 \\
    0 & \frac{1}{\beta}I
   \end{array}
 \right).
\]
Moreover, when $B$ is full column rank and  $\gamma\in (0,2)$ and $\mu \ge \alpha$,  $H$
 is positive definite.
\end{proposition}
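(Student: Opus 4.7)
The plan is a direct block-matrix computation. First I would invert $M$: since $M$ has the block lower-triangular form $M = \gamma\bigl(\begin{smallmatrix} I & 0 \\ -\beta B & I_m\end{smallmatrix}\bigr)$, its inverse is immediate by the usual elementary-row-operation trick,
\[
M^{-1} \;=\; \frac{1}{\gamma}\begin{pmatrix} I & 0 \\ \beta B & I_m \end{pmatrix},
\]
which one checks by direct multiplication.

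Next I would compute $H = QM^{-1}$ by substituting the expressions for $Q$ and $M^{-1}$ and multiplying out the four blocks. The off-diagonal cancellation is the key point: the $(2,1)$ block becomes $\frac{1}{\gamma}(-B + \frac{1}{\beta}\cdot\beta B) = 0$, while the $(1,2)$ block is automatically $0$ because $Q$ has a zero $(1,2)$ block and $M^{-1}$ is lower triangular. This produces the block-diagonal form claimed in \eqref{Matrix-H}, from which symmetry is immediate.

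For positive definiteness, the matrix \eqref{Matrix-H} is block diagonal, so it is positive definite iff each diagonal block is. The second block $\frac{1}{\gamma\beta} I_m$ is positive definite because $\gamma \in (0,2)$ and $\beta > 0$. The first block $\frac{\beta}{\gamma} B^T B$ is positive definite precisely because $B$ has full column rank (so $B^T B \succ 0$) together with $\gamma,\beta > 0$. I note that the hypothesis "$\mu \ge \alpha$" stated in the proposition does not seem to involve any symbols introduced so far in the paper, and the block-diagonal formula \eqref{Matrix-H} makes no use of it; I would either flag this as an apparent typo or silently drop it and record only the conditions actually used ($\gamma \in (0,2)$, $\beta>0$, and $B$ of full column rank).

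No step here is genuinely hard: the only thing to be careful about is the sign bookkeeping in the $(2,1)$ block when multiplying $Q$ and $M^{-1}$, since it is the cancellation there that both makes $H$ block diagonal and makes it symmetric. Everything else is routine verification.
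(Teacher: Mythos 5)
Your proposal is correct, and in fact the paper omits this proof entirely (``The proof is simple, so we omit it here''), so your computation of $M^{-1}=\frac{1}{\gamma}\bigl(\begin{smallmatrix} I & 0\\ \beta B & I_m\end{smallmatrix}\bigr)$, the block multiplication $H=QM^{-1}$ with the cancellation $-B+\frac{1}{\beta}\cdot\beta B=0$ in the $(2,1)$ block, and the positive-definiteness argument via $B^TB\succ 0$ and $\gamma,\beta>0$ supply exactly the verification the paper skips. You are also right to flag the hypothesis ``$\mu\ge\alpha$'': those symbols appear nowhere else in the paper, and the result needs only $\gamma>0$, $\beta>0$, and $B$ of full column rank, so it is safe to treat that clause as a typographical leftover.
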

\begin{proof}
  The proof is simple, so we omit it here.
\end{proof}

Lemma \ref{LEM-Prediction} shows that the accuracy of the generated iterate to the solution
point of   $\hbox{VI}(\Omega,F,\theta)$ is measured by the term $(v- \tilde{v}^k)^T Q(v^k-\tilde{v}^k)$. Therefore,
we need to further investigate the term $(v- \tilde{v}^k)^T Q(v^k-\tilde{v}^k)$.  Our purpose is to  write it  in terms of $\|v-v^k\|_H$ and $\|v-v^{k+1}\|_H$. This is given in the next lemma.

\begin{lemma}
For given $v^k=(y^k,\lambda^k)$, let $w^{k+1}$  be generated by the relaxed ADMM \eqref{relax-ADMM} and  ${\tilde w}^k$ be defined  by \eqref{Tnotation}.
Then, we have
  \begin{equation}\label{lemma-2}
  (v-\tilde{v}^k)^TH(v^k-v^{k+1})=\frac{1}{2}(\|v-v^{k+1}\|_H^2-\|v-uv^k\|_H^2)+\frac{1}{2}\|v^k-\tilde{v}^k\|_G^2,\ \forall v\in\Omega,
  \end{equation}
  where the matrix $G=Q^T+Q-M^THM$.
\end{lemma}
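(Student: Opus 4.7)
The plan is to reduce the identity to elementary linear algebra by exploiting two facts: the correction relation $v^k - v^{k+1} = M(v^k-\tilde v^k)$ from Lemma \ref{LEM-Correction}, and the algebraic identity $HM = Q$ that follows directly from the definition $H = QM^{-1}$ in \eqref{H=QM-1}. Additionally, Proposition \ref{Lemma-PH} tells us that $H$ is symmetric, which is what legitimizes the polarization identity for the $H$-weighted norm.

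The execution I have in mind is as follows. First, decompose the factor on the left by writing $v - \tilde v^k = (v-v^k) + (v^k - \tilde v^k)$, so that
$$
(v-\tilde v^k)^T H (v^k - v^{k+1}) = (v-v^k)^T H (v^k - v^{k+1}) + (v^k - \tilde v^k)^T H (v^k - v^{k+1}).
$$
For the first term, I would apply the standard polarization identity for a symmetric matrix: expanding $\|v - v^{k+1}\|_H^2 = \|(v-v^k) + (v^k - v^{k+1})\|_H^2$ and rearranging yields
$$
2(v-v^k)^T H(v^k - v^{k+1}) = \|v - v^{k+1}\|_H^2 - \|v - v^k\|_H^2 - \|v^k - v^{k+1}\|_H^2.
$$
For the second term, I would substitute the correction identity and use $HM = Q$, obtaining $(v^k - \tilde v^k)^T H(v^k - v^{k+1}) = (v^k - \tilde v^k)^T Q (v^k - \tilde v^k)$. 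The same substitution converts $\|v^k - v^{k+1}\|_H^2$ into $(v^k - \tilde v^k)^T M^T H M (v^k - \tilde v^k)$. Adding the two pieces and multiplying by $2$ gives
$$
2(v-\tilde v^k)^T H(v^k - v^{k+1}) = \|v - v^{k+1}\|_H^2 - \|v - v^k\|_H^2 + (v^k - \tilde v^k)^T (2Q - M^T H M)(v^k - \tilde v^k).
$$

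The only subtlety is the cosmetic symmetrization at the end: since any quadratic form satisfies $x^T(2Q)x = x^T(Q+Q^T)x$, the matrix $2Q - M^T H M$ inside the last quadratic form may be replaced by $G = Q + Q^T - M^T H M$ without change, and dividing through by $2$ produces exactly \eqref{lemma-2}. I do not anticipate a genuine obstacle here; the argument is essentially a two-line manipulation that hinges on recognizing $HM = Q$, and its significance is not in its difficulty but in preparing the contraction inequality that will drive the convergence and rate analysis in the subsequent lemmas.
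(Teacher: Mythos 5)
Your proof is correct and follows essentially the same route as the paper: both arguments rest on the correction relation $v^k-v^{k+1}=M(v^k-\tilde v^k)$, the identity $HM=Q$ from $H=QM^{-1}$, and the symmetrization $x^T(2Q)x=x^T(Q+Q^T)x$. The only cosmetic difference is that the paper applies a single four-point polarization identity $(a-b)^TH(c-d)=\tfrac12(\|a-d\|_H^2-\|a-c\|_H^2)+\tfrac12(\|c-b\|_H^2-\|d-b\|_H^2)$ and then rewrites $\|v^k-\tilde v^k\|_H^2-\|v^{k+1}-\tilde v^k\|_H^2$, whereas you split $v-\tilde v^k=(v-v^k)+(v^k-\tilde v^k)$ and expand; the two computations are equivalent.
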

\begin{proof}
Applying the identity
    \begin{equation}
    (a-b)^TH(c-d)=\frac{1}{2}(\|a-d\|_H^2-\|a-c\|_H^2)+\frac{1}{2}(\|c-b\|_H^2-\|d-b\|_H^2),
  \end{equation}
  to the right term of (\ref{Prediction-W}) with $a=v, b=\tilde{v}^k, c=v^k, d=v^{k+1}$, we obtain
   \begin{equation}\label{Q-equation-2}
      \begin{split}
  (v-\tilde{v}^k)^TH(v^k-v^{k+1})&=\frac{1}{2}(\|v-v^{k+1}\|_H^2-\|u-v^k\|_H^2)\nn\\
 &\qquad +\frac{1}{2}(\|v^k-\tilde{v}^k\|_H^2-\|v^{k+1}-\tilde{v}^k\|_H^2).
   \end{split}
\end{equation}
For the last term of (\ref{Q-equation-2}), we have
   \begin{equation}\label{Q-equation-3}
   \begin{split}
    \|v^k&-\tilde{v}^k\|_H^2-\|v^{k+1}-\tilde{v}^k\|_H^2\\
    &=\|v^k-\tilde{v}^k\|_H^2-\|(v^k-\tilde{v}^k)-(v^k-v^{k+1})\|_H^2\\
     &\stackrel{(\ref{Correction})}{=}\|v^k-\tilde{v}^k\|_H^2-\|(v^k-\tilde{v}^k)-M(v^k-\tilde{v}^k)\|_H^2\\
  & =2(v^k-\tilde{v}^k)^THM(v^k-\tilde{v}^k)-(v^k-\tilde{v}^k)^TM^THM( v^k-\tilde{v}^k)\\
    &\stackrel{(\ref{Matrix-H})}{=}(v^k-\tilde{v}^k)^T(Q^T+Q-M^THM)(v^k-\tilde{v}^k).
   \end{split}
\end{equation}
the assertion is proved.
\end{proof}

Now, we investigate the properties of the matrix G. Using \eqref{Matrix-H}, we have
\begin{eqnarray} \label{Matrix-G-P}
       G &= &(Q^T+ Q)- M ^TQ    \nn \\
        & =&  \left(\begin{array}{cc}
                2\beta B^TB& -B^T\\
         -B  & \frac{2}{\beta} I_m \end{array} \right)
          -   \left(\begin{array}{cc}
          \gamma I    &    - \gamma\beta B^T \\
           0 & \gamma I_m \end{array} \right)\left(\begin{array}{cc}
                \beta B^TB& 0\\
         -B  & \frac{1}{\beta} I_m \end{array} \right)  \nn \\
         & = &  \left(\begin{array}{cc}
                2\beta B^TB& -B^T\\
         -B  & \frac{2}{\beta} I_m \end{array} \right)- \left(\begin{array}{cc}
                2\gamma \beta B^TB& -\gamma B^T\\
         -\gamma B  & \frac{\gamma }{\beta} I_m \end{array} \right)\nn \\
         & = &\left(\begin{array}{cc}
    (2-2\gamma)\beta B^TB\; & \;(\gamma-1) B^T \\
    (\gamma-1) B & \frac{2-\gamma}{\beta} I
   \end{array}
 \right).
    \end{eqnarray}

\vspace{4cm}

\begin{lemma}\label{lem1}
Let {${v^k}$} be the sequence generated by a method for the problem \eqref{eq:1} and ${\tilde w ^{\rm{k}}}$ is obtained in the $k$-th iteration. If H defined in the Theorem \ref{thm1} is positive definite,then we have
\begin{equation}\label{eq:15}
\left\| {{v^{k + 1}} - {v^ * }} \right\|_H^2 \le \left\| {{v^k} - {v^ * }} \right\|_H^2 - \left\| {{v^k} - {{\tilde v}^k}} \right\|_G^2,\quad \forall {v^ * } \in {V^ * }.
\end{equation}
\end{lemma}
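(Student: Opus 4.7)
The plan is to chain together the three preceding lemmas and close the argument using the skew-symmetric structure of $F$. First, I would start from the prediction inequality \eqref{Prediction-W} of Lemma \ref{LEM-Prediction}. Using the definition $H = QM^{-1}$ from \eqref{H=QM-1} together with the correction identity \eqref{Correction-V}, the right-hand side can be rewritten as
$$(v-\tilde{v}^k)^T Q(v^k - \tilde{v}^k) = (v-\tilde{v}^k)^T H M(v^k-\tilde{v}^k) = (v-\tilde{v}^k)^T H(v^k-v^{k+1}),$$
so the prediction inequality becomes
$$\theta(u) - \theta(\tilde{u}^k) + (w-\tilde{w}^k)^T F(\tilde{w}^k) \ge (v-\tilde{v}^k)^T H(v^k - v^{k+1}),\quad \forall w \in \Omega.$$

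Second, I would apply the polarization-type identity \eqref{lemma-2} to the inner-product term on the right, yielding
$$\theta(u) - \theta(\tilde{u}^k) + (w-\tilde{w}^k)^T F(\tilde{w}^k) \ge \tfrac{1}{2}\bigl(\|v-v^{k+1}\|_H^2 - \|v-v^k\|_H^2\bigr) + \tfrac{1}{2}\|v^k-\tilde{v}^k\|_G^2.$$
This is the key inequality, valid for every $w \in \Omega$.

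Third, I would specialize $w = w^* \in \Omega^*$ and argue that the left-hand side is nonpositive. Applying the variational inequality \eqref{VI-P2-0} at $w^*$ with the test point $\tilde{w}^k \in \Omega$ gives $\theta(\tilde{u}^k) - \theta(u^*) + (\tilde{w}^k - w^*)^T F(w^*) \ge 0$, i.e.\ $\theta(u^*) - \theta(\tilde{u}^k) + (w^* - \tilde{w}^k)^T F(w^*) \le 0$. Next, from the block structure of $F$ in \eqref{VI-P2-wF}, the linear coefficient matrix of $F$ is skew-symmetric, so $(w^* - \tilde{w}^k)^T\bigl(F(w^*) - F(\tilde{w}^k)\bigr) = 0$, and hence
$$\theta(u^*) - \theta(\tilde{u}^k) + (w^* - \tilde{w}^k)^T F(\tilde{w}^k) \le 0.$$
Plugging $w=w^*$ into the key inequality above and using this bound on the left side, rearrangement yields exactly \eqref{eq:15}.

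The only nontrivial step is the skew-symmetry observation that lets me replace $F(\tilde{w}^k)$ by $F(w^*)$ in the cross term; everything else is algebra and direct invocation of Lemmas \ref{LEM-Correction} and \ref{LEM-Prediction} together with the identity \eqref{lemma-2}. Note that positive definiteness of $H$ is used only to make $\|\cdot\|_H$ a genuine norm; the inequality itself is an algebraic consequence of the three preceding lemmas and of the VI characterization of $w^*$.
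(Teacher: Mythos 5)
Your proof is correct, and it is exactly the argument the paper intends: the paper actually omits the proof of this lemma, but Lemmas \ref{LEM-Correction} and \ref{LEM-Prediction} and the identity \eqref{lemma-2} are set up precisely to be chained as you do, with the final step (substituting $w=w^*$, invoking the VI \eqref{VI-P2-0} and the skew-symmetry of the affine part of $F$ to make the left-hand side nonpositive) being the standard closing move of the He--Yuan prediction--correction framework. No gaps.
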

In the following, we further investigate the term $\left\| {{v^k} - {{\tilde v}^k}} \right\|_G^2$ and show how to bound it.\\
Because $G = \left( {\begin{array}{*{20}{c}}
{(2 - 2r)\beta {B^T}B}&{(r - 1){B^T}}\\
{(r - 1)B}&{\frac{{2 - r}}{\beta }I}
\end{array}} \right)\quad and\quad v = \left( \begin{array}{l}
y\\
\lambda
\end{array} \right)$,we have
 \begin{equation}\label{eq:23}
    \begin{split}
   \|v^k-\tilde{v}^k\|_G^2&= (2-2r)\beta\|B(y^k-\tilde{y}^k)\|^2+\frac{2-r}{\beta}\|\lambda^k-\tilde{\lambda}^k\|^2\\
   &+2(r-1)(\lambda^{k}-\tilde{\lambda}^{k})^TB(y^k-\tilde{y}^k)\\
   &=(2-2r)\beta\|B(y^k-\tilde{y}^k)\|^2+\frac{2-r}{\beta}\|\beta(A\tilde{x}^k+By^k-b)\|^2\\
   &\quad+2(r-1)\beta(A\tilde{x}^k+By^k-b)^TB(y^k-\tilde{y}^k)\\
      &=(2-2r)\beta\|B(y^k-\tilde{y}^k)\|^2+\frac{2-r}{\beta}\|\beta(A\tilde{x}^k+B\tilde{y}^k-b)+\beta B(y^k-\tilde{y}^k)\|^2\\
      &\quad+2(r-1)\{\beta(A\tilde{x}^k+B\tilde{y}^k-b)+\beta B(y^k-\tilde{y}^k)\}^TB(y^k-\tilde{y}^k)\\
            &=(2-2r)\beta\|B(y^k-\tilde{y}^k)\|^2+(2-r)\beta\|B(y^k-\tilde{y}^k)\|^2\\
           &\quad+\frac{2-r}{\beta}\|\beta(A\tilde{x}^k+B\tilde{y}^k-b)\|^2+(4-2r)\beta(A\tilde{x}^k+B\tilde{y}^k-b)^T B(y^k-\tilde{y}^k)\\
      &\quad+2(r-1)\beta(A\tilde{x}^k+B\tilde{y}^k-b)^TB(y^k-\tilde{y}^k)+2(r-1)\beta \|B(y^k-\tilde{y}^k)\|^2\\
   &=(2-r)\beta\|B(y^k-\tilde{y}^k)\|^2+\frac{2-r}{\beta}\|\beta(A\tilde{x}^k+B\tilde{y}^k-b)\|^2\\
   &\quad+2\beta(A\tilde{x}^k+B\tilde{y}^k-b)^TB(y^k-\tilde{y}^k)
    \end{split}
   \end{equation}
Notice that
\begin{equation*}
\hat{\lambda}^k=\lambda^{k}-\beta(A\tilde{x}^k+B\tilde{y}^k-b)
\end{equation*}
and
\begin{equation*}
y^k-\tilde{y}^k=\frac{1}{r}(y^k-y^{k+1})
\end{equation*}
Thus, we have
\begin{equation}\label{eq:24}
    \begin{split}
   \|v^k-\tilde{v}^k\|_G^2
   &=\frac{2-r}{r^2}\beta\|B(y^k-y^{k+1})\|^2+\frac{2-r}{\beta}\|\lambda^{k}-\hat{\lambda}^k\|^2\\
   &\quad+2(\lambda^{k}-\hat{\lambda}^k)^TB(y^k-\tilde{y}^k)
    \end{split}
\end{equation}
Meanwhile, we have
\begin{equation*}
\lambda^{k}-\hat{\lambda}^k=\frac{1}{r}(\lambda^k-\lambda^{k+1})
\end{equation*}
Substituting it in \eqref{eq:24}, we obtain
 \begin{equation}\label{eq:25}
    \begin{split}
   \|v^k-\tilde{v}^k\|_G^2
   &=\frac{2-r}{r^2}\beta\|B(y^k-y^{k+1})\|^2+\frac{2-r}{r^2\beta}\|\lambda^k-\lambda^{k+1}\|^2\\
   &\quad+2(\lambda^{k}-\hat{\lambda}^k)^TB(y^k-\tilde{y}^k)
    \end{split}
   \end{equation}
 Combining \eqref{eq:23} and \eqref{eq:25},we have
  \begin{equation}\label{eq:26}
      \begin{split}
   \|v^{k+1}-v^*\|_H^2&\leq\|v^{k}-v^*\|_H^2-\frac{2-r}{r^2}\beta\|B(y^k-y^{k+1})\|^2\nn\\
  &\qquad -\frac{2-r}{r^2\beta}\|\lambda^k-\lambda^{k+1}\|^2-2(\lambda^{k}-\hat{\lambda}^k)^TB(y^k-\tilde{y}^k).
       \end{split}
   \end{equation}
Now, we give the following theorem to establish the global convergence of our algorithm for $1<r<2$.
\begin{theorem}
For the sequence generated {${w ^k}$} by the over relaxation ADMM, if $(\lambda^{k}-\hat{\lambda}^k)^TB(y^k-\tilde{y}^k)\geq 0$, we have
\begin{equation}\label{eq:32}
\mathop {\lim }\limits_{k \to \infty } ({\left\| {B({y^k} - {y^{k + 1}})} \right\|^2} + {\left\| {{\lambda ^k} - {\lambda ^{k + 1}}} \right\|^2}) = 0.
\end{equation}
and the sequence {${v^k}$} converges
to a solution point ${v^\infty } \in {{\cal V}^ * }$.
\end{theorem}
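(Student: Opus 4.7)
The plan is to exploit inequality (26) above---combined with the standing hypothesis on the cross term and the fact that $\gamma\in(1,2)$ makes $\frac{2-\gamma}{\gamma^{2}}>0$---to produce Fej\'er monotonicity of $\{\|v^{k}-v^{*}\|_{H}^{2}\}$, from which all the asserted limits fall out by standard arguments.

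\textbf{Step 1 (Fej\'er monotonicity and summability).} Under the criterion, the cross-term $-2(\lambda^{k}-\hat\lambda^{k})^{T}B(y^{k}-\tilde y^{k})$ on the right-hand side of (26) is nonpositive, so $\|v^{k+1}-v^{*}\|_{H}^{2}\le\|v^{k}-v^{*}\|_{H}^{2}$ for every $v^{*}\in{\cal V}^{*}$. In particular $\{v^{k}\}$ is bounded, and telescoping yields
\begin{equation*}
\sum_{k=0}^{\infty}\Bigl(\|B(y^{k}-y^{k+1})\|^{2}+\|\lambda^{k}-\lambda^{k+1}\|^{2}\Bigr)<\infty,
\end{equation*}
which immediately implies (32).

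\textbf{Step 2 (Subsequential convergence of $\tilde w^{k}$).} Step~1 gives $\|B(y^{k}-y^{k+1})\|\to 0$ and $\|\lambda^{k}-\lambda^{k+1}\|\to 0$; since $B$ has full column rank, this forces $v^{k+1}-v^{k}\to 0$. The correction identity $v^{k+1}-v^{k}=-M(v^{k}-\tilde v^{k})$ from Lemma \ref{LEM-Correction} and the invertibility of $M$ (for $\gamma>0$) then force $v^{k}-\tilde v^{k}\to 0$. By Bolzano--Weierstrass a subsequence $v^{k_{j}}\to v^{\infty}$, and then $\tilde v^{k_{j}}\to v^{\infty}$ as well. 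To recover the $x$-component, rearrange the definition $\hat\lambda^{k}=\lambda^{k}-\beta(A\tilde x^{k}+B\tilde y^{k}-b)$ as
\begin{equation*}
A\tilde x^{k_{j}}=b-B\tilde y^{k_{j}}+\tfrac{1}{\beta}(\lambda^{k_{j}}-\hat\lambda^{k_{j}});
\end{equation*}
the right-hand side converges, and the full column rank of $A$ yields $\tilde x^{k_{j}}\to x^{\infty}$ for some $x^{\infty}\in{\cal X}$.

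\textbf{Step 3 (Limit is a solution, and the whole sequence converges).} Apply Lemma \ref{LEM-Prediction} along $k=k_{j}$ and let $j\to\infty$. The right-hand side $(v-\tilde v^{k_{j}})^{T}Q(v^{k_{j}}-\tilde v^{k_{j}})$ vanishes since $v^{k_{j}}-\tilde v^{k_{j}}\to 0$ while $v-\tilde v^{k_{j}}$ stays bounded, and continuity of $F$ together with lower semicontinuity of $\theta$ pass the left-hand side to $\theta(u)-\theta(u^{\infty})+(w-w^{\infty})^{T}F(w^{\infty})\ge 0$ for every $w\in\Omega$. Hence $w^{\infty}\in\Omega^{*}$ and $v^{\infty}\in{\cal V}^{*}$. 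Reapplying the Fej\'er inequality of Step~1 with $v^{*}=v^{\infty}$ shows that $\{\|v^{k}-v^{\infty}\|_{H}\}$ is monotone nonincreasing; since one of its subsequences tends to zero, the entire sequence $\{v^{k}\}$ converges to $v^{\infty}$.

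\textbf{Main obstacle.} The delicate step is passing to the limit in the prediction VI of Lemma \ref{LEM-Prediction}, because that inequality couples $\tilde v^{k}$ with the intermediate variable $\tilde x^{k}=x^{k+1}$, whereas the Fej\'er estimate controls only the essential part $v^{k}$. Step~2 is the bridge: the full column rank of $A$ is used precisely once, to pull a convergent subsequence of $\tilde x^{k}$ out of the constraint residual $A\tilde x^{k}+B\tilde y^{k}-b=\beta^{-1}(\lambda^{k}-\hat\lambda^{k})$, and this is what makes the continuity argument in Step~3 go through.
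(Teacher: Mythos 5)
Your proposal is correct and follows essentially the same route as the paper: Fej\'er monotonicity of $\|v^k-v^*\|_H^2$ from inequality \eqref{eq:26} under the criterion, telescoping to get \eqref{eq:32}, extraction of a cluster point, passage to the limit in the prediction inequality of Lemma \ref{LEM-Prediction}, and a final Fej\'er argument with $v^*=v^\infty$. If anything, your Step~2 is more careful than the paper's, which regenerates $\tilde x^\infty$ by running an iteration from the cluster point rather than, as you do, extracting $\tilde x^{k_j}$ from the constraint residual via the full column rank of $A$ and explicitly using the invertibility of $M$ to pass from $v^{k+1}-v^k\to 0$ to $v^k-\tilde v^k\to 0$.
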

\begin{proof}
Let $({y^0},{\lambda ^0})$ be the initial iterate. For $1<r<2$, according to \eqref{eq:26}, there are constants ${C_1} = \frac{{2 - r}}{{{r^2}}}\beta  > 0,\;and\;{C_2} = \frac{{2 - r}}{{{r^2}\beta }} > 0$ such that
\begin{equation}\label{eq:33}
{C_1}{\left\| {B({y^k} - {y^{k + 1}})} \right\|^2} + {C_2}{\left\| {{\lambda ^k} - {\lambda ^{k + 1}}} \right\|^2} \le \left\| {{v^k} - {v^*}} \right\|_H^2 - \left\| {{v^{k + 1}} - {v^*}} \right\|_H^2  \quad \forall {v^*} \in {{\cal V}^* }
\end{equation}
Summing the above inequality from $k=0$ to $\infty$, we obtain
\begin{equation*}
\sum\limits_{k = 0}^\infty  {({C_1}{{\left\| {B({y^k} - {y^{k + 1}})} \right\|}^2} + {C_2}{{\left\| {{\lambda ^k} - {\lambda ^{k + 1}}} \right\|}^2})}  \le \left\| {{v^0} - {v^*}} \right\|_H^2
\end{equation*}
and thus we obtain the assertion \eqref{eq:32}.\\
Furthermore, it follows from \eqref{eq:33} that the sequence {${v^k}$} is in a compact set and it has a subsequence ${{v}^{{k_j}}}$ converging to a cluster point, say, ${v^\infty }$. Let ${{\tilde x}^\infty }$ be generated by $({y^\infty },{\lambda ^\infty })$. Because $B$ assumed to be full column rank and equation \eqref{eq:32}, we have
\begin{equation*}
B({y^\infty } - {{\tilde y}^\infty }) = 0\quad and\quad {\lambda ^\infty } - {{\tilde \lambda }^\infty } = 0
\end{equation*}
Then, according to \eqref{Prediction-W}, we get
\begin{equation*}
\theta (u) - \theta ({{\tilde u}^\infty }) + {(w  - {{\tilde w }^\infty })^T}F({{\tilde w }^\infty }) \ge 0,\quad\forall w  \in \Omega ,
\end{equation*}
So ${{\tilde w }^\infty } = {w ^\infty }$ is a solution point of \eqref{VI-P2-0}. Also, \eqref{eq:33} is true for any solution point of \eqref{VI-P2-0}, we have
\begin{equation*}
\left\| {{v^{k + 1}} - {v^\infty }} \right\|_H^2 \le \left\| {{v^k} - {v^\infty }} \right\|_H^2
\end{equation*}
the sequence {${v^k}$} cannot have another cluster point and thus it converges to a solution point ${v^ * } = {v^\infty } \in {{\cal V}^ * }$.
\end{proof}
Because $y^k$ is the corrector of $\tilde{y}^{k-1}$, $\lambda^{k}$ is the corrector of $\hat{\lambda}^{k-1}$. In most cases, $(\lambda^{k}-\hat{\lambda}^k)^TB(y^k-\tilde{y}^k)\geq0$ is true in practical computation, while only a very small part of cases do not hold. The frequency of not being true depends on whether $r$ is close to 2. Therefore, we can first judge whether $(\lambda^{k}-\hat{\lambda}^k)^TB(y^k-\tilde{y}^k)\geq 0$  is true or not. If it is true, we can use over relaxation method to set $r$ from 0 to 2; otherwise we let $r=1$, which is the original ADMM method.
\section{Numerical Experiment}

%In this section, we report the numerical performance of our over-relax ADMM for solving the robust portfolio selection problem with synthetic data. Also, we compare it with the original ADMM to examine its efficiency.\\

 In this paper, we report the numerical performance of our over-relax ADMM  by solving the Lasso problem and the Sparse inverse covariance selection. We compare our over-relax ADMM  with the relaxed customized PPA proposed in \cite{CGHY}. All the simulations are performed on a Laptop with 8GB RAM memory, using Matlab R2015b.

\subsection{Lasso Problem}
The least absolute shrinkage and selection operator (Lasso) was first prososed by Tibshirani in his famous work \cite{tibshirani1996regression}, and has been used as a very popular and attractive method for regularization and variable selection for high-dimensional data in statics and machine learning.  Lasso can be mathematically formulated as
\begin{equation}\label{lasso}
  \mbox{min}\ \frac{1}{2}\|Ax-b\|_2^2+\rho\|x\|_1,
\end{equation}
where $\rho>0$ is a  regularization parameter. The $m\times n$ dimensional matrix $A$ contains the $m$-dimensional independent observations of $x$, and the vector $b$ contains the $m$-dimensional independent observations of the response variable.  The main reason for the popularity of Lasso is that it can handle the case where predictor variables are larger than samples and produce sparse models which are easy to interpret.

Now, we show how to use the  ADMM   for solving Lasso (\ref{lasso}). First, we rewrite the original problem (\ref{lasso}) into the canonical form in form of  (\ref{eq:1}) that consists of two sparable variables. Introducing a new variable $y=x$, the problem can be written as
\begin{equation}\label{eq:lasso}
    \begin{split}
\mbox{min}\quad & \frac{1}{2}\|Ax-b\|_2^2+\rho\|y\|_1\\
\mbox{s.t.}\quad &x-y=0.
\end{split}
\end{equation}
Then apply ADMM  to (\ref{eq:lasso}), we get
\begin{subequations}
\begin{numcases}{}
x^{k+1}={\rm arg}\!\min\{\frac{1}{2}\|Ax-b\|_2^2+\frac{\beta}{2}\|x-y^k-\frac{1}{\beta}z^k\|_2^2\},\label{ADMcs-la-1}\\
y^{k+1}={\rm arg}\!\min\{\rho\|y\|_1+\frac{\beta}{2}\|x^{k+1}-y-\frac{1}{\beta}z^k\|_2^2\}.\label{ADMcs-la-2}\\
z^{k+1}=z^{k}-\beta\left(x^{k+1}-y^{k+1}\right),\label{ADMcs-la-3}
\end{numcases}
\end{subequations}

The first subproblem is an unconstrained  convex quadratic minimization.
Its unique solution can be given by
\begin{equation}\label{ADMcs-sov1}
x^{k+1}:=(A^TA+\beta I)^{-1}(A^Tb+\beta y^k+z^k ).
\end{equation}

Applying the Sherman$-$Morrison formula to \eqref{ADMcs-sov1}, we can obtain
\begin{equation}
x^{k+1}:=\left[\frac{1}{\beta}I-\frac{1}{\beta^3}A^T(\beta I+AA^T)^{-1}A\right](A^Tb+\beta y^k+z^k).
\end{equation}

The second subproblem is a $l_1+l_2$ minimization,  and it can be solved via using  the soft thresholding operator
\begin{equation}\label{ADMcs-sov2}
\tilde{y}^k:=\mathcal{S}_{\frac{\rho}{\beta}}(x^{k+1}+\tilde{z}^k),
\end{equation}
where the operator is defined by
\begin{equation}\label{soft-oper}
\mathcal{S}_{\kappa}(a)=(a-\kappa)_{+}-(-a-\kappa)_+.
\end{equation}
We refer the reader to \cite{boyd2011distributed} for more details.

The data was generated similarly with   \cite{boyd2011distributed}, and as follows.  For different size of $m $ and $n$, we generate the matrix $A$  with independent standard Gaussian
values, then we standardize the columns of $A$ to have unit $l_2$ norm. A `true' value $x_{\textrm{true}}\in\Re^n$ is generated with around 100 nonzero entries, each sampled from an $\mathcal{N}(0,1)$ distribution. The labels $b$ are then computed as $b=Ax_{\textrm{true}}\in\Re^n+v$, where $v\sim \mathcal{N}(0,10^{-3}I)$, which corresponds to a signal-to-noise ratio $\|Ax_{\textrm{true}}\|_2^2/\|v\|_2^2$ of around 200.

The regularization parameter was determined by $\rho=0.1\rho_{\textrm{max}}$, where $\rho_{\textrm{max}}=\|A^Tb\|_\infty$ is the critical value of $\rho$ above which the solution of the lasso problem is $x=0$. The proximal parameter  for the ADMM and relaxed ADMM is set to be $\beta=\frac{1}{\delta}$. The  termination tolerance is defined as similarly with   \cite{boyd2011distributed}, that is,
\begin{equation}
\|r^{k}\|_2=\|x^{k}-y^{k}\|_2\le \varepsilon^{\mbox{pri}},\qquad \|s^{k}\|_2=\|y^{k}-y^{k-1}\|_2\le \varepsilon^{\mbox{dual}}.
\end{equation}
where
\begin{equation}\label{error}
    \begin{split}
&\varepsilon^{\mbox{pri}}=\sqrt{p}\varepsilon^{\mbox{abs}} +\varepsilon^{\mbox{rel}}\max\{\|x^k\|_2,\|y^k\|_2\}\\
&\varepsilon^{\mbox{dual}}=\sqrt{n}\varepsilon^{\mbox{abs}} +\varepsilon^{\mbox{rel}} \|y^k\|_2
    \end{split}
\end{equation}
and  we set  $\varepsilon^{\mbox{abs}}=10^{-5},\ \varepsilon^{\mbox{rel}}=10^{-3}$ for both methods. All variables were initially set to be zero. Since $(m<n)$ in matrix $A$, we use the Sherman$–$Morrison formula to $(A^TA+\beta I)^{-1}$ and instead factor the smaller matrix $I+\beta AA^T$, which is then cached for subsequent $x$-updates.

Table 1-3 gives the iterations required and the total computational time for ADMM, relaxed customized ADMM and our relaxed ADMM with $\gamma=1.8$. From the table, we can see that, the relaxed ADMM is always faster than the other two algorithms and it requires less iteration steps and less time to reach the termination tolerance. Thus, relaxed ADMM is more efficient than the original ADMM and the relaxed customized ADMM. The numerical results of three methods for the case with $m=1000,n=1500$ are plotted in Fig 1. The left part in Fig 1 presents the relationship of primal residual $r^k$ and the number of iteration while the right part shows the relationship of dual residual $s^k$ and the number of iteration. The green curve plots our relaxed ADMM and the other two curves belongs to ADMM and relaxed customized ADMM respectively. We can observe our method converges linearly and is considerably faster than other two methods, which supports our convergence analysis.

\begin{table*}[!ht]
\newcommand{\cell}[1]{#1}
\newcommand{\cellc}[1]{\textrm{\color{blue}#1}}
\begin{center}
\caption{Performance comparison of ADMM, Relaxed ADMM and Relaxed customized ADMM($\varepsilon^{\mbox{abs}}=10^{-5},\ \varepsilon^{\mbox{rel}}=10^{-3}$) }\label{pca1}
\vskip0.25cm
\begin{tabular}{|cc|c|c|c|c|c|c|c|c|c|c|c|c|}\hline
\multicolumn{2}{|c|}{$ m\times n$  matrix} & \multicolumn{4}{|c|}{ ADMM}& \multicolumn{4}{|c|}{ Relaxed ADMM} & \multicolumn{4}{|c|}{Relaxed customized ADMM}  \\ \hline
 $m$ & $n $    &  \cell{ Iter.  }&  \cell{$\|r^{k}\|_2$  }&  \cell{$\|s^{k}\|_2$   } &  \cell{Time }  & \cell{Iter.   } &  \cell{$\|r^{k}\|_2$   }&  \cell{$\|s^{k}\|_2$ }&  \cell{Time } &  \cell{ Iter.  }&  \cell{$\|r^{k}\|_2$  }&  \cell{$\|s^{k}\|_2$   } &  \cell{Time } \\ \hline
1000	 & 1500	 &   18	 &  5.54e-03 &  1.45e-03 &  0.10	&   14&  5.94e-03&  3.54e-03 & 0.09 &  28 & 2.04e-03& 3.43e-03	&  0.14 \\ \hline
1500	 & 1500	 &   16	 &  5.16e-03 &  2.26e-03 &  0.16	&  22&  6.09e-03&  1.06e-03 &  0.17 &  30	 & 1.47e-03	 & 3.26e-03&  0.19 \\ \hline
1500	 & 3000	 &  20	 &  9.23e-03 &  2.42e-03 & 0.38	&  20& 9.23e-03&  2.42e-03 & 0.38 &  27 & 4.01e-03 & 5.71e-03&  0.41 \\	 \hline
2000	 & 3000	 &  18	 &  7.70e-03 &  3.63e-03 &  0.55 &  15&  5.30e-03&  4.33e-03 & 0.50&  29 & 2.83e-03	& 4.72e-03&  0.72\\	 \hline
3000	 & 3000	 &   16	 &  6.97e-03 &  2.88e-03 &  0.59	&   13&  6.81e-03&  2.43e-03 & 0.52&   28	 & 2.29e-03& 5.05e-03	 & 0.79\\	 \hline
3000	 & 5000	 &   18	 &  1.26e-02 &  4.17e-03 &  1.48&   17	&  8.59e-03&  6.06e-03 & 1.39&   27 & 4.78e-03 & 7.30e-03 &  1.75 \\	\hline
4000	 & 5000	 &   17	 &  9.99e-03 &  3.53e-03 &  2.23	&  14&  1.05e-02&  6.76e-03 & 2.14	&  28& 3.04e-03 & 5.67e-03 & 2.97  \\ \hline	
5000	 & 5000	 &  16	 &  1.00e-02 &  4.69e-03 &  1.76	&   15&  9.99e-03&  4.53e-03 & 1.64&   28	& 2.99e-03	& 6.64e-03	& 2.25 \\	 \hline	
5000	 & 10000	 &   20	 &  1.70e-02 &  6.59e-03 &   5.40	 &   20&  1.70e-02&  6.59e-03 &  5.29&   26& 7.72e-03& 1.06e-02& 6.03 \\ \hline	
7000	 & 10000	 &   18	 &  1.57e-02 &  3.78e-03 &   9.37	 &   16&  1.02e-02&  5.64e-03 &  8.98&   23& 6.85e-03& 1.11e-02& 10.53 \\	 \hline	
10000	 & 10000	 &   16	 &  1.19e-02 &  6.15e-03 &    7.76	 &   12&  1.71e-02&  9.33e-03 &   7.31&   29& 4.07e-03& 8.96e-03& 10.14\\ \hline	
\end{tabular}
\end{center}
\vskip-0.4cm
\end{table*}

\begin{table*}[!ht]
\newcommand{\cell}[1]{#1}
\newcommand{\cellc}[1]{\textrm{\color{blue}#1}}
\begin{center}
\caption{Performance comparison of ADMM, Relaxed ADMM and Relaxed customized ADMM($\varepsilon^{\mbox{abs}}=10^{-6},\ \varepsilon^{\mbox{rel}}=10^{-4}$) }\label{pca2}
\vskip0.25cm
\begin{tabular}{|cc|c|c|c|c|c|c|c|c|c|c|c|c|}\hline
\multicolumn{2}{|c|}{$ m\times n$  matrix} & \multicolumn{4}{|c|}{ ADMM}& \multicolumn{4}{|c|}{ Relaxed ADMM} & \multicolumn{4}{|c|}{ Relaxed customized ADMM}  \\ \hline
 $m$ & $n $    &  \cell{ Iter.  }&  \cell{$\|r^{k}\|_2$  }&  \cell{$\|s^{k}\|_2$   } &  \cell{Time }  & \cell{Iter.   } &  \cell{$\|r^{k}\|_2$   }&  \cell{$\|s^{k}\|_2$ }&  \cell{Time }  &  \cell{ Iter.  }&  \cell{$\|r^{k}\|_2$  }&  \cell{$\|s^{k}\|_2$   } &  \cell{Time } \\ \hline
1000	 & 1500	 &  27	 &  7.04e-04 &  7.19e-05 &  0.17&   24	&  7.09e-04&  8.16e-05 &  0.15 &   36	 & 2.10e-04	 & 3.43e-04&  0.19\\  \hline
1500	 & 1500	 &  23	 &  6.28e-04 &  5.94e-05 &  0.18&   19&  4.46e-04&  5.93e-05 &  0.16 &   38 & 1.66e-04	 & 3.69e-04&  0.26 \\ \hline
1500	 & 3000	 &  31	 &  9.51e-04 &  1.13e-04  &   0.47&   25&  8.32e-04&  1.84e-04 &  0.43&   36 & 4.36e-04 & 5.97e-04&  0.55 \\ \hline
2000	 & 3000	 &   28	 &  9.04e-04 &  9.02e-05 &  0.66&   22&  7.37e-04&  1.17e-04 &  0.58&   38 & 3.39e-04 & 5.59e-04&  0.84 \\ \hline
3000	 & 3000	 &   24	 &  8.12e-04 &  7.15e-05 &   0.72	&   22&  6.03e-04&  5.62e-05 &  0.69&   38& 2.12e-04	 & 4.72e-04	&  0.95 \\ \hline
3000	 & 5000	 &   30	 &  8.85e-04 &  8.12e-05 &   1.81	 &   24&  9.09e-04&  1.28e-04 &  1.63	&   34& 5.00e-04& 7.34e-04&  1.98 \\ \hline
4000	 & 5000	 &   26	 &  9.85e-04 &  1.28e-04 &   2.77	 &   21&  9.77e-04&  2.61e-04 &  2.49&   36& 3.26e-04& 6.00e-04&  3.29 \\ \hline	
5000	 & 5000	 &   24	 &  1.02e-03 &  1.07e-04 &    2.07 &   18&  9.55e-04&  1.40e-04 &  1.74	&   38& 2.80e-04& 6.23e-04& 2.70 \\ \hline 5000	 & 10000	 &  32	 &  1.67e-03 &  1.81e-04 &     6.62 &   26	&  1.75e-03&  3.09e-04 & 5.96	 &   35	& 7.84e-04& 1.06e-03& 6.94\\ \hline	7000	 & 10000	 &   28	 &  1.42e-03 &  1.50e-04 &     11.35&   25&  1.14e-03&  1.34e-04 &  10.58&   34	 & 6.78e-04&1.11e-03& 11.99\\ \hline	10000	 & 10000	 &   23	 &  1.73e-03 &  1.81e-04 &     9.03&  19&  1.65e-03&  4.37e-04 &  8.76&   38	 & 4.24e-04& 9.34e-04& 11.84\\	 \hline	
\end{tabular}
\end{center}
\vskip-0.4cm
\end{table*}

\begin{table*}[!ht]
\newcommand{\cell}[1]{#1}
\newcommand{\cellc}[1]{\textrm{\color{blue}#1}}
\begin{center}
\caption{Performance comparison of ADMM, Relaxed ADMM and Relaxed customized ADMM($\varepsilon^{\mbox{abs}}=10^{-7},\ \varepsilon^{\mbox{rel}}=10^{-5}$) }\label{pca3}
\vskip0.25cm
\begin{tabular}{|cc|c|c|c|c|c|c|c|c|c|c|c|c|}\hline
\multicolumn{2}{|c|}{$ m\times n$  matrix} & \multicolumn{4}{|c|}{ ADMM}& \multicolumn{4}{|c|}{ Relaxed ADMM} & \multicolumn{4}{|c|}{ Relaxed customized ADMM}  \\ \hline
 $m$ & $n $    &  \cell{ Iter.  }&  \cell{$\|r^{k}\|_2$  }&  \cell{$\|s^{k}\|_2$   } &  \cell{Time }  & \cell{Iter.   } &  \cell{$\|r^{k}\|_2$   }&  \cell{$\|s^{k}\|_2$ }&  \cell{Time } & \cell{Iter.   } &  \cell{$\|r^{k}\|_2$   }&  \cell{$\|s^{k}\|_2$ }&  \cell{Time } \\ \hline
1000	 & 1500	 &   38	 &  6.51e-05 &  5.42e-06 & 0.20&   31&  6.63e-05&  5.20e-06 &  0.17 & 49 & 2.16e-05	& 3.62e-05	 &  0.25 \\  \hline
1500	 & 1500	 &   33	 &  6.37e-05 &  5.48e-06 & 0.22&   24&  5.94e-05&  8.77e-06 &  0.18&   48 & 1.82e-05& 4.04e-05&  0.28 \\ \hline
1500	 & 3000	 &   43	 &  9.60e-05 &  7.08e-06 & 0.62&   35&  1.03e-04&  1.55e-05 &   0.50&   50	 & 3.34e-05	 & 4.88e-05& 0.66\\	 \hline
2000	 & 3000	 &   40	 &  7.96e-05 &  4.83e-06 &  0.90	&   31&  8.21e-05&  4.87e-06 &  0.75&   44 & 3.69e-05	 & 5.81e-05& 0.93 \\	 \hline
3000	 & 3000	 &   33	 &  8.98e-05 &  7.18e-06 &  0.88	 &   27&  8.56e-05&  6.74e-06 &  0.75	&  49 & 2.15e-05 & 4.76e-05	 & 1.13 \\	 \hline
3000	 & 5000	 &   40	 &  1.09e-04 &  9.12e-06 &   2.32	 &   32	&  9.37e-05&  8.12e-06 &  1.96&   46& 4.85e-05& 7.38e-05	 &2.53 \\	 \hline
4000	 & 5000	 &   34	 &  1.27e-04 &  1.18e-05 &    3.27	 &   27	&  1.24e-04&  1.19e-05 &   2.84&   51& 2.98e-05& 5.76e-05	 &4.13 \\	 \hline	
5000	 & 5000	 &   33	 &  1.07e-04 &  8.72e-06 &   2.62	 &   26	&  1.01e-04&  7.79e-06 &   2.21&   49& 2.91e-05 & 6.43e-05&3.39 \\	 \hline	
5000	 & 10000	 &   46	 &  1.35e-04 &  8.40e-06 &   8.48 &   36	&  1.23e-04&  1.43e-05 &   7.20&   41& 1.03e-04& 1.37e-04	 & 7.69\\	 \hline	
7000	 & 10000	 &   38	 &  1.61e-04 &  1.14e-05 &     13.16&   31	&  1.62e-04&  1.13e-05 &   12.30&  45& 6.61e-05& 1.08e-04 &14.54\\ \hline	10000	 & 10000	 &   34	 &  1.22e-04 &  9.52e-06 & 11.99&   25 &  1.12e-04&  8.56e-06 &   10.32&  46& 4.47e-05& 9.98e-05 &14.33 \\ \hline	
\end{tabular}
\end{center}
\vskip-0.4cm
\end{table*}

\begin{figure*}[ht!]
         \begin{center}
  \centerline{%\scalebox{0.41}{\includegraphics{FIG/Fig7}} \
  \scalebox{0.4}{\includegraphics{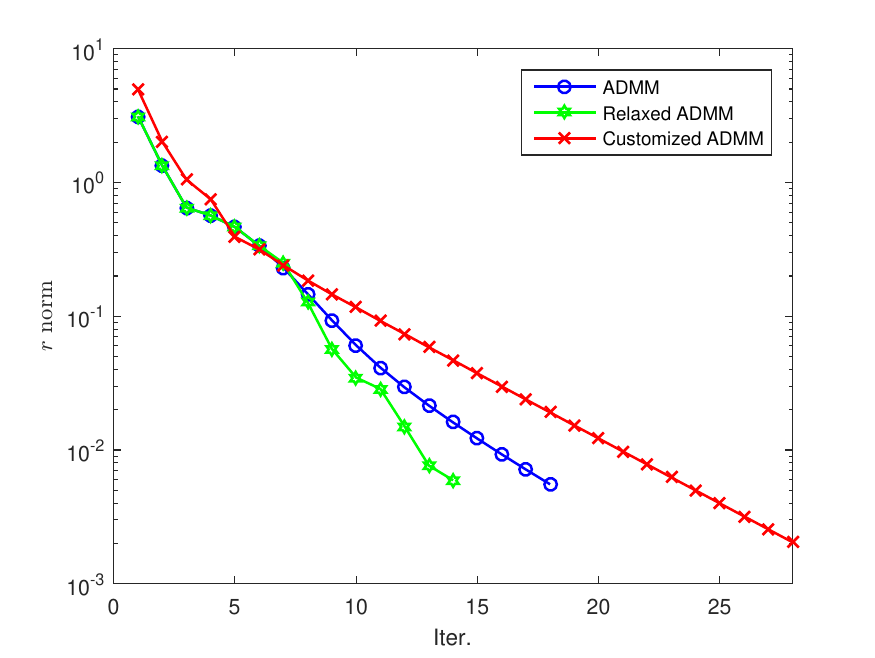}} \ 
  \scalebox{0.4}{\includegraphics{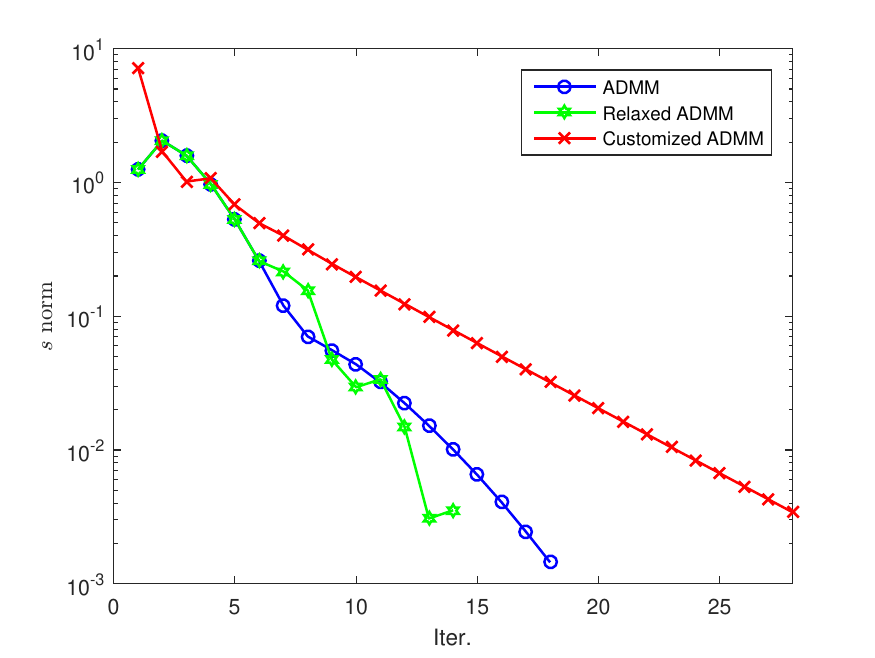}}
  }
   \caption{%Evolution of the rank ($L$),
   The primal and dual
residual for ADMM, relaxed ADMM and relaxed customized ADMM.}\label{fig1}
\end{center}
\end{figure*}

\subsection{Sparse inverse covariance selection}
In this section, we consider to use the ADMM to solve the sparse inverse covariance selection problem (SCSP), which first proposed in \cite{Arthur}. This problem can be understood as a
structure learning problem of estimating the topology of the undirected
graphical model representation of the Gaussian, and is a popular method using in reverse engineering of genetic regulatory networks. Suppose the vector $a_i,\ i=1,... N$  is a sample  from a zero mean Gaussian distribution
in $\Re^n$. i.e.,
\[
a_i \sim N(0,\Sigma), i= 1,...,N,
\]
but the  positive definite covariance matrix $\Sigma$ is unknown. Our  task  is to estimate the covariance matrix $\Sigma$ under the
assumption that $\Sigma^{-1}$ is sparse.
Let $S = (1/m)\sum_{i=1}^m a_i a_i^T$
be the empirical covariance matrix of the
sample, the convex formulation of SCSP is as follows.
\begin{equation}\label{covsel}
  \mbox{min} \{\mbox{\textbf{Tr}}(SX)-\log\det X +\tau\|X\|_1\ | \ X\in {\mathbb{S}}_{++}^n \}
\end{equation}
where  $\|\cdot\|_1  $ denotes the $l_1$ norm,  ${\mathbb{S}}_{++}^n$ denotes the set of symmetric positive definite $n\times n$ matrices, and $\mbox{\textbf{Tr}}(SX)$ is the trace of  $S X $. Now we show how to apply ADMM for SCSP. First, we rewrite the original problem (\ref{covsel}) into the canonical form in form of  (\ref{eq:1}) that consists of two sparable variables. Introducing a new variable $Y=X$, the problem can be written as
\begin{equation}\label{covsel:2}
    \begin{split}
\mbox{min}\quad & \mbox{\textbf{Tr}}(SX)-\log\det X +\tau\|Y\|_1\\
\mbox{s.t.}\quad &X-Y=0.
\end{split}
\end{equation}
Then using ADMM or \eqref{covsel:2}, we get
\begin{subequations}
\begin{numcases}{}
X^{k+1}={\rm arg}\!\min\{ \mbox{\textbf{Tr}}(SX)-\log\det X+\frac{\beta}{2}\|X-Y^k-\frac{1}{\beta}\Lambda^k\|_F^2\},\label{ADM-1}\\
Y^{k+1}={\rm arg}\!\min\{\tau\|Y\|_1+\frac{\beta}{2}\|X^{k+1}-Y-\frac{1}{\beta}\Lambda^k\|_F^2\},\label{ADM-2}\\
\Lambda^{k+1}=\Lambda^{k}-\beta\left(X^{k+1}-Y^{k+1}\right).\label{ADM-3}
\end{numcases}
\end{subequations}
The $X$-minimization can be solved by the  orthogonal eigenvalue
decomposition. The $X$-minimization can be solved  by the soft thresholding operator \eqref{soft-oper}. Thus the subproblems can all have closed form solutions. We refer the reader to e.g., \cite{Yuan2012} for more details.

The implementation details are as follows: we take the initial iterate as $Y^{0}= 0, \Lambda^{0}=0$. The relaxation factor is set to be $\gamma= 1.7$ and $\beta$ is all set to be $\beta =1$.the stopping criteria is set to be similar as  \eqref{error}. For different number of features $n$, we always set the number of samples  be $0.01*n^2$.
We also randomly generated ten cases for each instance, and the results reported   were averaged over ten runs. In Tables 4-6, we report the performance of the relaxed ADMM algorithm for different settings of the dimensions.
It shows that the relaxed ADMM is significantly faster than the ADMM and the relaxed customized admm.  For example, for $n=300$ with $\varepsilon^{\mbox{abs}}=10^{-6},\ \varepsilon^{\mbox{rel}}=10^{-4}$, ADMM and the relaxed customized ADMM solved it to the desired accuracy with $21$ and $29$ iterations respectively, while the relaxed ADMM took $14$ iterations. The numerical results of three methods for the case with $n=300$ are plotted in Fig 2. The left part in Fig 2 presents the relationship of primal residual $r^k$ and the number of iteration while the right part shows the relationship of dual residual $s^k$ and the number of iteration. The pink curve plots our relaxed ADMM and the other two curves belongs to ADMM and relaxed customized ADMM respectively. We can observe our method converges linearly and is considerably faster than other two methods, which supports our convergence analysis.

\begin{table*}[!ht]
\newcommand{\cell}[1]{#1}
\newcommand{\cellc}[1]{\textrm{\color{blue}#1}}
\begin{center}
\caption{Performance comparison of ADMM, Relaxed ADMM and Relaxed customized ADMM($\varepsilon^{\mbox{abs}}=10^{-4},\ \varepsilon^{\mbox{rel}}=10^{-2}$) }\label{pca4}
\vskip0.25cm
\begin{tabular}{|c|c|c|c|c|c|c|c|c|c|c|c|c|}\hline
\multicolumn{1}{|c|}{  matrix} & \multicolumn{4}{|c|}{ ADMM}& \multicolumn{4}{|c|}{ Relaxed ADMM} & \multicolumn{4}{|c|}{ Relaxed customized ADMM}  \\ \hline
 $n $    &  \cell{ Iter.  }&  \cell{$\|r^{k}\|_2$  }&  \cell{$\|s^{k}\|_2$   } &  \cell{Time }  & \cell{Iter.   } &  \cell{$\|r^{k}\|_2$   }&  \cell{$\|s^{k}\|_2$ }&  \cell{Time } &  \cell{ Iter.  }&  \cell{$\|r^{k}\|_2$  }&  \cell{$\|s^{k}\|_2$   } &  \cell{Time } \\ \hline
  200	 &  11	 &  1.25e-01 &  5.16e-03 &      0.20	 &    9	 &  1.09e-01 &  6.28e-03 &   0.15	 &   16	 &  1.00e-01 &  1.33e-01 &    0.28\\  \hline
  300	 &   9	 &  1.55e-01 &  9.21e-03 &      0.37	 &    7	 &  1.20e-01 &  7.77e-03 &    0.28 &   16	 &  1.25e-01 &  1.61e-01 &     0.64\\  \hline	500	 &   7	 &  2.06e-01 &  9.81e-02 &      0.88	 &    6	 &  2.06e-01 &  4.45e-02 &    0.78 &   16	 &  1.65e-01 &  2.04e-01 &    2.19	\\  \hline
  700	 &   6	 &  2.40e-01 &  3.28e-01 &      1.71	 &    6	 &  1.43e-01 &  1.37e-02 &     1.77  &   16	 &  1.98e-01 &  2.37e-01 &   5.09	\\  \hline
  900	 &  6	     &  2.53e-01 &  3.85e-01 &      3.26	 &    7	 &  8.36e-02 &  8.46e-03 &    4.11 &   15	 &  3.24e-01 &  3.83e-01 &   9.14	\\  \hline
   1100	 &  7	     &  1.20e-01 &  1.95e-01 &      7.14	 &    5	 &  1.98e-01 &  3.47e-02 &      5.15&   15	 &  3.61e-01 &  4.19e-01 &   16.84 \\ \hline
\end{tabular}
\end{center}
\vskip-0.4cm
\end{table*}

\begin{table*}[!ht]
\newcommand{\cell}[1]{#1}
\newcommand{\cellc}[1]{\textrm{\color{blue}#1}}
\begin{center}
\caption{Performance comparison of ADMM, Relaxed ADMM and Relaxed customized ADMM($\varepsilon^{\mbox{abs}}=10^{-5},\ \varepsilon^{\mbox{rel}}=10^{-3}$) }\label{pca5}
\vskip0.25cm
\begin{tabular}{|c|c|c|c|c|c|c|c|c|c|c|c|c|}\hline
\multicolumn{1}{|c|}{  matrix} & \multicolumn{4}{|c|}{ ADMM}& \multicolumn{4}{|c|}{ Relaxed ADMM} & \multicolumn{4}{|c|}{ Relaxed customized ADMM}   \\ \hline
 $n $    &  \cell{ Iter.  }&  \cell{$\|r^{k}\|_2$  }&  \cell{$\|s^{k}\|_2$   } &  \cell{Time }  & \cell{Iter.   } &  \cell{$\|r^{k}\|_2$   }&  \cell{$\|s^{k}\|_2$ }&  \cell{Time } &  \cell{ Iter.  }&  \cell{$\|r^{k}\|_2$  }&  \cell{$\|s^{k}\|_2$   } &  \cell{Time } \\ \hline
    200	 &   19	 &  1.08e-02 &  6.67e-05 &  0.35	 &   14	 &  1.01e-02 &  9.29e-05 &   0.25	 &   22	 &  1.18e-02 &  1.56e-02 &   0.40 \\ \hline
  300	 &   15	 &  1.49e-02 &  1.10e-04 &    0.63 &   11	 &  7.29e-03 &  3.31e-05 &   0.45	 &   22	 &  1.47e-02 &  1.90e-02 &   0.91 \\ \hline
  500	 &   12	 &  1.99e-02 &  1.05e-03 &    1.53	 &    9	 &  1.74e-02 &  1.76e-04 &    1.21	 &   22	 &  1.94e-02 &  2.40e-02 &   2.96	 \\ \hline
  700	 &   11	 &  1.92e-02 &  4.19e-03 &    3.43  &    8	 &  2.72e-02 &  3.43e-04 &   2.46 &   22	 &  2.34e-02 &  2.79e-02 &    7.05 \\ \hline
  900	 &   10	 &  3.20e-02 &  1.23e-02 &    5.85 &    9	 &  1.44e-02 &  1.22e-04 &    5.35	 &   22	 &  2.67e-02 &  3.15e-02 &     13.64\\  \hline
  1100	 &   10	 &  2.30e-02 &  1.59e-02 &    10.65&    8	 &  1.83e-02 &  2.92e-04 &    8.70 &   22 &  2.97e-02 &  3.44e-02 &   25.49 \\ \hline
\end{tabular}
\end{center}
\vskip-0.4cm
\end{table*}

\begin{table*}[!ht]
\newcommand{\cell}[1]{#1}
\newcommand{\cellc}[1]{\textrm{\color{blue}#1}}
\begin{center}
\caption{Performance comparison of ADMM, Relaxed ADMM and Relaxed customized ADMM ($\varepsilon^{\mbox{abs}}=10^{-6},\ \varepsilon^{\mbox{rel}}=10^{-4}$) }\label{pca6}
\vskip0.25cm
\begin{tabular}{|c|c|c|c|c|c|c|c|c|c|c|c|c|}\hline
\multicolumn{1}{|c|}{  matrix} & \multicolumn{4}{|c|}{ ADMM}& \multicolumn{4}{|c|}{ Relaxed ADMM}  & \multicolumn{4}{|c|}{ Relaxed customized ADMM} \\ \hline
 $n $    &  \cell{ Iter.  }&  \cell{$\|r^{k}\|_2$  }&  \cell{$\|s^{k}\|_2$   } &  \cell{Time }  & \cell{Iter.   } &  \cell{$\|r^{k}\|_2$   }&  \cell{$\|s^{k}\|_2$ }&  \cell{Time } &  \cell{ Iter.  }&  \cell{$\|r^{k}\|_2$  }&  \cell{$\|s^{k}\|_2$   } &  \cell{Time } \\ \hline
 200	 &   26	 &  1.29e-03 &  1.13e-06 &      0.51	 &   19	 &  9.84e-04 &  4.96e-07 &      0.36	 &   29	 &  9.73e-04 &  1.29e-03 &   0.56	 \\ \hline
  300&   21	 &  1.29e-03 &  1.11e-06 &      0.85	 &   14	 &  1.43e-03 &  1.96e-06 &      0.54	 &   29	 &  1.21e-03 &  1.56e-03 &    1.21	 \\ \hline
  500&  17	 &  1.74e-03 &  1.23e-05 &      2.31	 &   12	 &  1.29e-03 &  9.19e-07 &      1.63	 &   29	 &  1.60e-03 &  1.98e-03 &   3.88 \\ \hline
  700&  15	 &  2.32e-03 &  1.33e-04 &      4.64	 &   11	 &  1.49e-03 &  1.25e-06 &      3.58	 &   28	 &  2.75e-03 &  3.28e-03 &   8.98 \\ \hline
  900&  15	 &  2.13e-03 &  1.76e-04 &      8.90	 &   11	 &  2.42e-03 &  4.37e-06 &      6.64	 &   28	 &  3.14e-03 &  3.70e-03 &  17.40 \\ \hline
  1100&  14 &  2.43e-03 &  5.71e-04 &   15.61   &   10	 &  2.81e-03 &  6.40e-06 &     11.15	 &   28	 &  3.50e-03 &  4.05e-03  & 32.00 \\ \hline
\end{tabular}
\end{center}
\vskip-0.4cm
\end{table*}

\begin{figure*}[ht!]
         \begin{center}
  \centerline{%\scalebox{0.41}{\includegraphics{FIG/Fig7}} \
  \scalebox{0.4}{\includegraphics{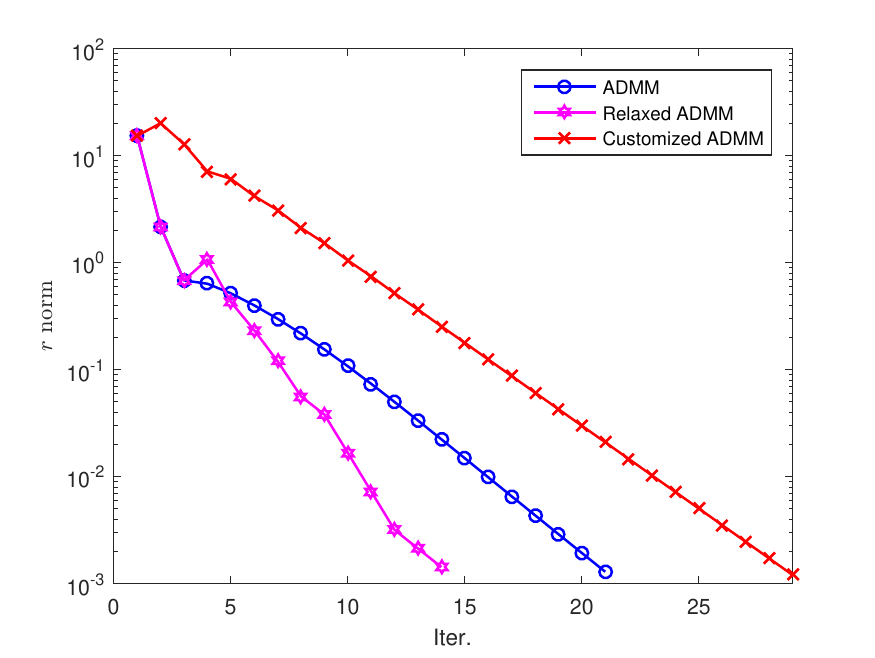}} \ 
  \scalebox{0.4}{\includegraphics{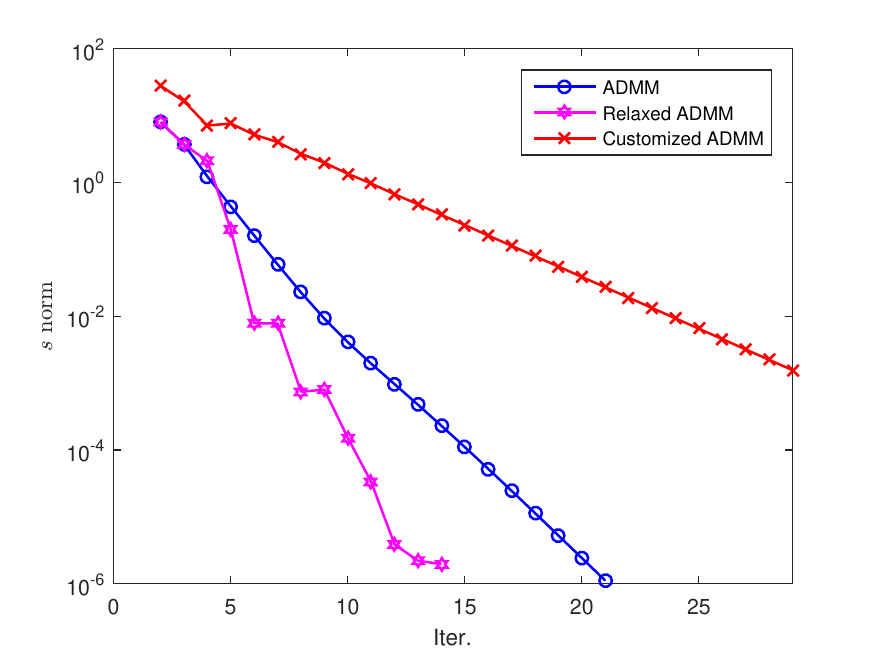}}
  }
   \caption{%Evolution of the rank ($L$),
   The primal and dual
residual for ADMM, relaxed ADMM and relaxed customized ADMM. ($n=300$)}\label{fig2}
\end{center}
\end{figure*}

\section{Conclusions}\label{section:conclusions}

In this paper, we propose a over-relaxed ADMM method and introduce a simple criterion. We show that, when the criterion is satisfied at each iteration, we can over-relax the variables of ADMM. We prove the global convergence of the over-relaxed ADMM. Finally, we demonstrate the numerical performance improvement of the this algorithm by solving Lasso and sparse inverse covariance selection problems. In the future, we will investigate the linear convergence rate of our algorithm under some regularity conditions for error bounds and extend our over-relaxation scheme to the multi-block ADMM.

%References

\end{document}